\newcommand{\R}{{\mathbb R}}
\renewcommand{\phi}{{\varphi}}
\newcommand{\Per}{\mathrm{Per}}
\newtheorem{theorem}{Theorem}[section]
\newtheorem{lemma}[theorem]{Lemma}
\theoremstyle{definition}
\newcommand{\eps}{\varepsilon}
\newcommand{\EEE}{\color{black}}
\begin{document}
 
\author{Annalisa Cesaroni}
\address{Dipartimento di Matematica "Tullio Levi-Civita", Universit\`{a} di Padova, Via Trieste 63, 35131 Padova, Italy}
\email{annalisa.cesaroni@unipd.it}
\author{Ilaria Fragal\`a}
\address{Dipartimento di Matematica, Politecnico di Milano, Piazza Leonardo da Vinci 32, 20133 Milano, Italy}
\email{ilaria.fragala@polimi.it}
\author{Matteo Novaga}
\address{Dipartimento di Matematica, Universit\`{a} di Pisa, Largo Bruno Pontecorvo 5, 56127 Pisa, Italy}
\email{matteo.novaga@unipi.it}

\begin{abstract} 
We show that the hexagonal honeycomb is optimal among convex periodic tessellations of the plane, provided the cost functional is lower semicontinuous with respect to the Hausdorff convergence, and decreasing under Steiner symmetrization. \end{abstract} 
 
\title{A note on the honeycomb optimality \\ among periodic convex  tilings}
\maketitle

\section{Introduction} 
%In this note  we show that the regular hexagonal honeycomb is the optimal tiling  of the plane, among all possible tessellations by convex polygons, with respect to any energy functional which is lower semicontinuous with respect to   convergence in measure of convex sets and has is decreasing under Steiner symmetrization. 
Finding optimal planar tilings, and in particular investigating the optimality of the honeycomb structure made by a packing of regular hexagons,  is a challenging and widely studied problem, both  for  geometric  and variational functionals.
For the classical De Giorgi perimeter,  in the seminal paper  \cite{hales}   Hales proved   the optimality of the honeycomb in full generality, namely among all possible coverings of the plane made by cells of equal area  (see also  \cite{morgansolo}).  For the Cheeger constant, the result is proved in  \cites{BBFV, BF1} 
(respectively with and without convexity constraint), while the cases of the first Robin Laplacian eigenvalue  and of the Robin torsional rigidity, have been settled  in \cite{BF2} (under convexity constraint).   For the first Dirichlet Laplacian eigenvalue, 
the optimality of the honeycomb  corresponds to a conjecture formulated by Caffarelli and Lin  \cite{CaffLin}, and remains, to the best of our knowledge, open.

In the recent paper \cite{cnf1}, we proved the existence of an optimal periodic tessellation minimizing quite general nonlocal perimeter functionals defined in terms of  interaction kernels, and we discussed the possible optimality of the honeycomb in the planar case, based on a symmetrization procedure. 
Since the description  of such procedure, as it was sketched in \cite[Lemma 5.1]{cnf1}, contained  some inaccuracies (though all the statements in \cite{cnf1} are correct), a first goal of this note is to fix rigorously its proof; a second goal is to observe that this procedure provides the optimality of the honeycomb for a much wider class of optimal partition problems. 
%  result for a general class of nonlocal perimeter functionals defined in terms of  interaction kernels, by exploiting a  construction based on Steiner symmetrization procedure that we are going to adopt also in this note. The description of this procedure given in \cite[Section 5]{cnf1} contains an inaccuracy, which we are going to fix in this note. Nonetheless the results stated in \cite{cnf1} are  correct. 
%
%Let us mention that the nonlocal isoperimetric problem in the class of polygons  with a fixed number of sides has been recently considered in   \cite{BCT,BBF}. 
%In particular, in \cite{BBF} it is shown that such problem is more delicate than expected, because, depending on the choice of the kernel, the phenomenon of symmetry breaking (meant as the non optimality of the regular gon) may occur for every even number of sides  larger than or equal to $6$. 

More precisely, in this note we prove that every centrally symmetric hexagon and every parallelogram  can be transformed into the regular hexagon with equal area by applying iteratively appropriate Steiner symmetrizations  (see Section \ref{sec:steiner}). Since  the only convex polygons which tessellate the plane by translations are centrally symmetric hexagons and parallelograms, 
as shown in \cite{fedorov,mm}, this implies that, for any  functional which is lower semicontinuous with respect to   the Hausdorff convergence of convex sets and is  decreasing under Steiner symmetrization, the honeycomb is the optimal periodic convex tiling of the plane. Plenty of geometric or variational energies  fit into this general framework (see Section \ref{sec:main}); moreover,  for some of them, the honeycomb  turns out to be the  {\it unique} optimal tiling. 

Several challenging related questions remain open. In particular is it natural to ask
whether, for all the energy functionals considered in this note, which include the first Dirichlet Laplacian eigenvalue, the honeycomb is optimal also without the periodicity and the convexity assumptions, corresponding to  
a generalized version of the  conjecture by Caffarelli and Lin \cite{CaffLin}.    A related  interesting open question concerns on the other hand  the possible existence of other local minimizers for such energies.    

A first step in the direction of proving the conjecture   would be establishing the existence of an optimal tessellation, by analogy to what was done in \cite{cnf1}  for nonlocal perimeters.    We recall that in \cite{cnparti}, a weaker  version of this conjecture was considered in the case of  anisotropic local perimeters under periodicity constraints: more precisely  it is proved that the optimal periodic planar tilings are given by centrally symmetric convex hexagons (eventually degenerating to parallelograms)  in the case of strictly convex anisotropy, and   by nondegenerate   convex hexagons  when the anisotropy is also differentiable.

\section{Steiner symmetrization for periodic convex tilings}\label{sec:steiner}

Let $\mathcal{H}$ denote the family of convex polygons which tessellate the plane  by translations. Recall that  such  a convex polygon  can only be either a centrally symmetric hexagon or a parallelogram \cite{mm, fedorov}. 
%Hence, admissible polygons in  $\mathcal{H}$   are reduced to centrally symmetric convex hexagons,  possibly degenerated into a parallelogram,  that is, possibly having  two triples of vertices aligned. 
 The following lemma provides  a   constructive symmetrization procedure on the class $\mathcal{H}$.

%We will say that a sequence $E_n$ of sets in $\mathcal{C}$ converges in measure to $E$ if $\chi_{E_n}\to\chi_E$ in $L^1(\R^2)$.  Note that this convergence is equivalent to the Hausdorff convergence in $\mathcal{C}$, that is, convergence with respect to the Hausdorff distance of sets. 
For convenience of the reader, let us recall the classical definition of Steiner symmetrization of a set  $E$ 
belonging  more generally to the class  $\mathcal{C}$ of compact convex sets in $\R^2$ with nonempty interior, 
with respect to a straight line $\pi\subseteq\R^2$ passing through the origin. Denoting by  
$v$ the normal to $\pi$, and setting $L_{x}=\{x+tv \, , \,  t\in\R\}$ for $x\in \pi$,   the Steiner symmetrization of $E$ (see \cite{barn}) is given by 
\begin{equation}\label{f:steiner} 
S_\pi(E)=\bigcup_{x\in \pi, L_x\cap E\neq \emptyset}  \{x+rv, \ 2|r|\leq \mathcal{H}^1(E\cap L_x)\}. 
\end{equation} 
It is straightforward to check that, if $E\in \mathcal{C}$, then $S_\pi(E)\in\mathcal{C}$, with $|S_\pi(E)|=|E|$.

 \begin{lemma}\label{lemmahex} 
Every $H_0  \in \mathcal H$ can be transformed  into  a regular hexagon,  by using 
at most countable  Steiner symmetrizations.  \EEE 
\end{lemma}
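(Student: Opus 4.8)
The plan is to transform $H_0$ in countably many steps whose iterates converge, in the Hausdorff distance, to a regular hexagon of area $|H_0|$, the final step being an infinite perimeter‑decreasing iteration. Throughout I use: (i) a Steiner symmetrization preserves the area, does not increase the perimeter, and strictly decreases it unless the body is already symmetric about the chosen line; (ii) a symmetrization with respect to a line through the centre of a centrally symmetric set preserves central symmetry, so we stay inside the class $\mathcal H^\star$ of centrally symmetric convex polygons with at most six vertices (i.e. centrally symmetric hexagons and parallelograms), which is closed under Hausdorff limits; and (iii) by the isoperimetric inequality for $n$‑gons together with the fact that the perimeter of the regular $n$‑gon of fixed area decreases in $n$, the regular hexagon is the unique minimizer of the perimeter in $\mathcal H^\star$ with prescribed area.

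First, if $H_0$ is a parallelogram I symmetrize once with respect to a line through its centre that is neither parallel to a side nor a symmetry axis of $H_0$: the slice‑length function then has exactly two interior corners, so the result is a centrally symmetric hexagon, and the perimeter strictly decreases. Hence I may assume $H_0$ is a centrally symmetric hexagon, with vertices $\pm P_1,\pm P_2,\pm P_3$ in cyclic order. Call a line $\pi$ through the centre \emph{admissible} if $S_\pi(H)$ is again a hexagon. The key elementary observation is that $\pi$ is admissible whenever it is orthogonal to one of $P_1+P_2$, $P_2+P_3$, $P_1-P_3$ and the remaining antipodal pair of vertices attains the width of $H$ in the direction of $\pi$: indeed, then the orthogonal projections onto $\pi$ of the four ``middle'' vertices collapse to two opposite values, the slice‑length function has only two interior corners, and $S_\pi(H)$ has six sides; moreover, being symmetric about $\pi$ and centrally symmetric, it is symmetric about $\pi^\perp$ as well. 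A short signed‑area bookkeeping shows that at least one of the three directions is admissible, and that for a hexagon already symmetric about two orthogonal lines all three are admissible (in particular one of them is not an axis of that hexagon). Performing one admissible symmetrization, I reduce to the case in which $H_0$ is a hexagon symmetric about two orthogonal lines.

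For a hexagon $H$ let $\Pi(H)$ be the finite nonempty set of admissible lines, and for a parallelogram let $\Pi(H)$ consist of the lines through the centre that are neither parallel to a side nor a symmetry axis; put $\delta(H):=\sup_{\pi\in\Pi(H)}\big(\mathrm{Per}(H)-\mathrm{Per}(S_\pi(H))\big)\ge 0$. I claim $\delta(H)=0$ precisely when $H$ is the regular hexagon. If a non‑regular hexagon had $\delta(H)=0$ it would be symmetric about every admissible line; symmetry about one admissible line together with central symmetry forces symmetry about two orthogonal lines, and then $H$ is still symmetric about an admissible line distinct from those two axes, producing a third axis of symmetry; since the rotation part of the symmetry group of a convex hexagon has order dividing $6$, and here it has even order (central symmetry) and is $\ge 3$ (three axes), it must be $6$, so $H$ is regular — a contradiction; and a parallelogram plainly has $\delta>0$. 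Now set $H_{k+1}:=S_{\pi_k}(H_k)$ with $\pi_k\in\Pi(H_k)$ chosen so that $\mathrm{Per}(H_k)-\mathrm{Per}(H_{k+1})\ge\tfrac12\delta(H_k)$, stopping if some $H_k$ is regular. Otherwise $\mathrm{Per}(H_k)$ strictly decreases and stays $\ge p^\star$, the perimeter of the regular hexagon of area $|H_0|$, hence converges, so $\delta(H_k)\to 0$. As the $H_k$ have fixed area and bounded perimeter they are uniformly bounded; by Blaschke's selection theorem a subsequence converges to some $K\in\mathcal H^\star$ with $\mathrm{Per}(K)=\lim_k\mathrm{Per}(H_k)$. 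The admissible directions and all relevant perimeters are continuous in the polygon, so $\delta(K)=\lim_l\delta(H_{k_l})=0$; if $K$ is a hexagon this already forces $K$ to be regular, whereas if $K$ were a parallelogram — necessarily a rectangle or a rhombus, since $|K|=|H_0|>0$ — one checks directly that the three limiting admissible directions of the $H_{k_l}$ cannot all be symmetry axes of $K$, so this case does not occur. Hence $K$ is the regular hexagon and $\lim_k\mathrm{Per}(H_k)=p^\star$. Consequently every Hausdorff subsequential limit of $(H_k)$ has area $|H_0|$ and perimeter $p^\star$, so by the uniqueness in (iii) it is the regular hexagon; therefore the whole sequence $(H_k)$ converges in the Hausdorff distance to the regular hexagon, which is the asserted transformation by countably many Steiner symmetrizations.

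The step I expect to be the main obstacle is the convergence in the last part: upgrading ``the perimeter strictly decreases unless $H$ is regular'' to genuine convergence to the regular hexagon. The compactness argument above needs care at two points — continuity of $H\mapsto\delta(H)$ across the boundary between hexagons and parallelograms, and the exclusion of a degenerating limit that is a rectangle, a square or a rhombus, which is handled by writing down the three limiting admissible directions of the degenerating sequence and checking that they are not all axes of the limit. An essentially equivalent but more computational alternative is to write the map induced on the two shape‑parameters of the hexagon obtained after the second step and to prove that its iterates contract to the parameters of the regular hexagon; a direct check shows that map has the regular hexagon as its unique fixed point in the relevant range.
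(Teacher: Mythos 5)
Your proposal takes a genuinely different route from the paper's (a greedy perimeter--decreasing iteration closed by compactness and a rigidity statement, rather than a structured alternation of axes with explicit recursive inequalities), and several of its ingredients are sound; but there is a genuine gap exactly at the point you flag, and the fix you propose does not work. The issue is the exclusion of degenerate subsequential limits. Since the three admissible directions of a nondegenerate centrally symmetric hexagon are pairwise non-parallel, a parallelogram limit $K$ carrying three distinct reflection axes would indeed have to be a square; you assert that ``one checks directly'' that the three limiting admissible directions cannot all be symmetry axes of $K$, but this is false for the vertex--collision degeneration. Concretely, let $H^{(\eps)}$ be the hexagon with vertices $(\pm 1,0)$ and $(\pm\eps,\pm 1)$: it is convex for small $\eps>0$, centrally symmetric, and symmetric about both coordinate axes (so it has exactly the symmetry your iterates enjoy). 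As $\eps\to 0$ it converges to the square $K$ with vertices $(\pm1,0),(0,\pm1)$, and its three admissible directions, orthogonal to $P_1+P_2=(1+\eps,1)$, $P_2+P_3=(0,2)$ and $P_3-P_1=(-1-\eps,1)$, converge to the lines $y=-x$, the $x$-axis and $y=x$, which are three of the four symmetry axes of $K$. Hence all three perimeter decrements, and therefore $\delta(H^{(\eps)})$, tend to $0$, even though $\Per(K)=4\sqrt 2$ strictly exceeds the perimeter $4\cdot 3^{1/4}$ of the regular hexagon of the same area $2$. So ``$\delta(H_k)\to 0$ plus compactness'' does not force the limit to be regular: your check only rules out the degeneration in which three consecutive vertices become collinear with the middle one interior to the edge, and the greedy sequence could a priori stall at a square.

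Repairing this needs more than continuity bookkeeping, because $\delta$ genuinely degenerates near the square; one must either control the actual dynamics of the iteration there or abandon the greedy/compactness scheme. The paper does the latter: it always symmetrizes about the axis of a diagonal joining a side of length $a_n$ to one of length $b_n$, so that every iterate is a centrally symmetric hexagon with four sides of length $a_{n+1}$ and two of length $b_{n+1}$, and the three elementary inequalities $4a_{n+1}+2b_{n+1}\le 4a_n+2b_n$, $2a_{n+1}\le a_n+b_n$, $b_{n+1}\le a_n$ already force $a_n$ and $b_n$ to converge to a common positive limit, with no compactness or degeneration analysis at all. A further, minor, point: in your initial reduction for parallelograms the correct requirement on the line is that it be orthogonal neither to a side direction nor to a diagonal (otherwise the symmetrized set is again a quadrilateral), which is not the condition you state.
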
 
 
\begin{proof}
Starting from a fixed hexagon $H_0 \in \mathcal H$ (possibly degenerated into a parallelogram), let us construct a sequence of centrally symmetric  hexagons $H_n$ (not degenerating into parallelograms), such that $H_n$ converge in Hausdorff distance to a regular hexagon $H^*$. We proceed by steps.  

\smallskip 
{\bf Step 1: inizialization of the symmetrization procedure.}

Let denote by $A_0,B_0,C_0,D_0,E_0,F_0$ the vertices of $H_0$, ordered  in counterclockwise sense. We apply to  $H_0$ a Steiner symmetrization with respect of the axis of a diagonal
joining the  vertices of two consecutive sides, say $A_0 E_0$.
By this way, $H_0$ is transformed into a 
new hexagon   $H_1$, whose vertices $A_1,B_1,C_1,D_1,E_1,F_1$ are not aligned three by three,
namely $H _1$ is not degenerated into a parallelogram (see Figure \ref{figura1}, where $H_0$ is represented in dotted line, and $H _1$ in continuous line; notice that $H _1$ is not degenerated into a parallelogram even if this is the case for $H _0$, see \cite[Figure 1]{cnf1}). 
 Precisely, we have:
$A_1 = A_0$, $E_1 = E_0$, the triangle $A_0 E_0 F_0 $ becomes the isosceles triangle $A_1 E_1 F_1$, the sides $A_0B_0$ and $D_0E_0$ are transformed into the sides $A _1 B_1$ and $D_1E_1  $, which are parallel to the symmetry axis, and finally the triangle $B_0 C_0 D _0 $ becomes the isosceles triangle $B_1 C_1 D_1$. 
In particular,  the pairs  $(A_1B_1, D_1 E_1  )$, $(A_1F_1,  E_1 F_1)$ and $(B_1 C_1, C_1D_1)$ are congruent and, by symmetry with respect to the origin, also the sides $A_1F_1$ and $C_1D_1$ are congruent, as well as the sides
$B_1 C_1$ and $E_1F_1$. 

Thus, $H_{1}$ has $4$ congruent  sides of length $a_1= \overline{ A_1F_1} =\overline{E_1 F_1} = \overline{C_1 D_1}= \overline{B_1C_1}$, and $2$ congruent sides of length $b_1=\overline{A_1B_1} =\overline{D_1 E_1}$.  Moreover, since Steiner symmetrization perserves the area, we have that
$|H|=|H_0|$.    
\vspace{0.2 cm} 
 \begin{figure} [h] 
\centering   
\def\svgwidth{7cm}   
\begingroup%
  \makeatletter%
  \providecommand\color[2][]{%
    \errmessage{(Inkscape) Color is used for the text in Inkscape, but the package 'color.sty' is not loaded}%
    \renewcommand\color[2][]{}%
  }%
  \providecommand\transparent[1]{%
    \errmessage{(Inkscape) Transparency is used (non-zero) for the text in Inkscape, but the package 'transparent.sty' is not loaded}%
    \renewcommand\transparent[1]{}%
  }%
  \providecommand\rotatebox[2]{#2}%
  \ifx\svgwidth\undefined%
    \setlength{\unitlength}{265.53bp}%
    \ifx\svgscale\undefined%
      \relax%
    \else%
      \setlength{\unitlength}{\unitlength * \real{\svgscale}}%
    \fi%
  \else%
    \setlength{\unitlength}{\svgwidth}%
  \fi%
  \global\let\svgwidth\undefined%
  \global\let\svgscale\undefined%
  \makeatother%
  \begin{picture}(5,0.5)%
    \put(0.6,-0.1){\includegraphics[height=4.5cm]{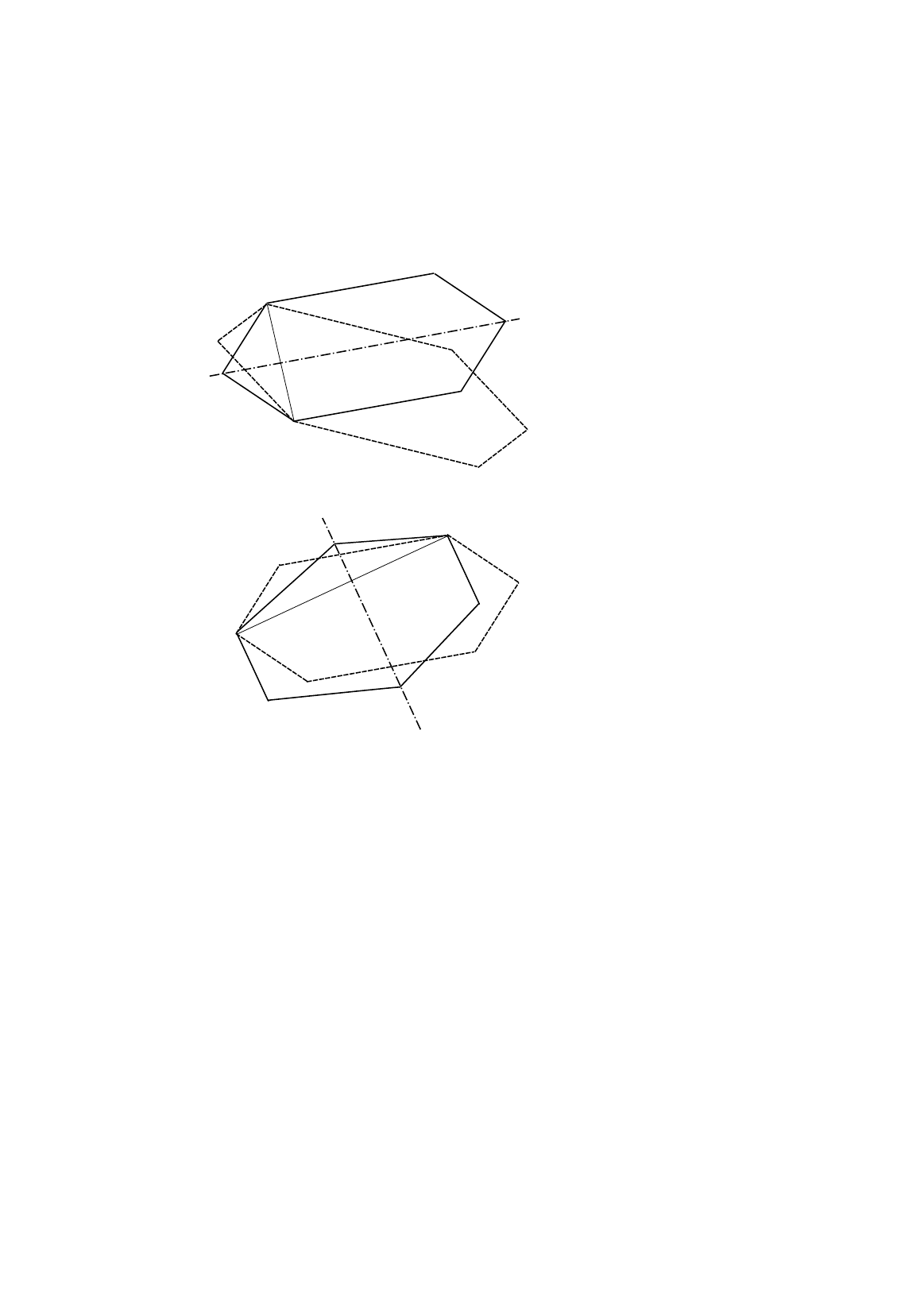} 
  }    
    \put(0.67, 0.05){\color[rgb]{0,0,0}\makebox(0,0)[lb]{\smash{$A_0=A_1$}}}
    \put(0.67, 0.44){\color[rgb]{0,0,0}\makebox(0,0)[lb]{\smash{$E_0=E_1$}}}
    \put(0.6, 0.3){\color[rgb]{0,0,0}\makebox(0,0)[lb]{\smash{$F_0$}}}
        \put(0.58, 0.2){\color[rgb]{0,0,0}\makebox(0,0)[lb]{\smash{$F_1$}}}
 \put(1.33, -0.08){\color[rgb]{0,0,0}\makebox(0,0)[lb]{\smash{$B_0$}}}
 \put(1.49, 0.04){\color[rgb]{0,0,0}\makebox(0,0)[lb]{\smash{$C_0$}}}
 \put(1.3, 0.12){\color[rgb]{0,0,0}\makebox(0,0)[lb]{\smash{$B_1$}}}
  \put(1.41, 0.385){\color[rgb]{0,0,0}\makebox(0,0)[lb]{\smash{$C_1$}}}
    \put(1.3, 0.27){\color[rgb]{0,0,0}\makebox(0,0)[lb]{\smash{$D_0$}}} 
    \put(1.24, 0.5){\color[rgb]{0,0,0}\makebox(0,0)[lb]{\smash{$D_1$}}} 
     \end{picture}%
\endgroup%
\vskip .5 cm
\caption{}
\label{figura1}   
\end{figure} 

{\bf Step 2: construction of the sequence of hexagons $H_n$.}

We proceed by applying the same procedure as in Step 1 to  $H_1$, by symmetrizing with respect to the axis of the diagonal
connecting  a side of length $a_1$ and a side of length $b_1$, say e.g. the diagonal  $D_1F_1$.  We 
thus obtain a new hexagon  $H_{2}$, having  $4$ congruent sides of length $a_2$ and $2$ congruent sides of length $b_2$ (see Figure \ref{figura2}, where $H_1$ is represented in dotted line, and $H _2$ in continuous line).  

We point out that  the reason for displaying this second step, is that we aim at comparing the lengths of the sides of $H _1$ with the lengths of the sides of $H _2$ (comparison that we are going to  extend  iteratively to the next hexagons in the construction). 
  
 \vspace{0.2 cm} 
 \begin{figure} [h] 
\centering   
\def\svgwidth{7cm}   
\begingroup%
  \makeatletter%
  \providecommand\color[2][]{%
    \errmessage{(Inkscape) Color is used for the text in Inkscape, but the package 'color.sty' is not loaded}%
    \renewcommand\color[2][]{}%
  }%
  \providecommand\transparent[1]{%
    \errmessage{(Inkscape) Transparency is used (non-zero) for the text in Inkscape, but the package 'transparent.sty' is not loaded}%
    \renewcommand\transparent[1]{}%
  }%
  \providecommand\rotatebox[2]{#2}%
  \ifx\svgwidth\undefined%
    \setlength{\unitlength}{265.53bp}%
    \ifx\svgscale\undefined%
      \relax%
    \else%
      \setlength{\unitlength}{\unitlength * \real{\svgscale}}%
    \fi%
  \else%
    \setlength{\unitlength}{\svgwidth}%
  \fi%
  \global\let\svgwidth\undefined%
  \global\let\svgscale\undefined%
  \makeatother%
 \begin{picture}(5,0.5)%
    \put(0.6,-0.1){ \includegraphics[height=4.5cm]{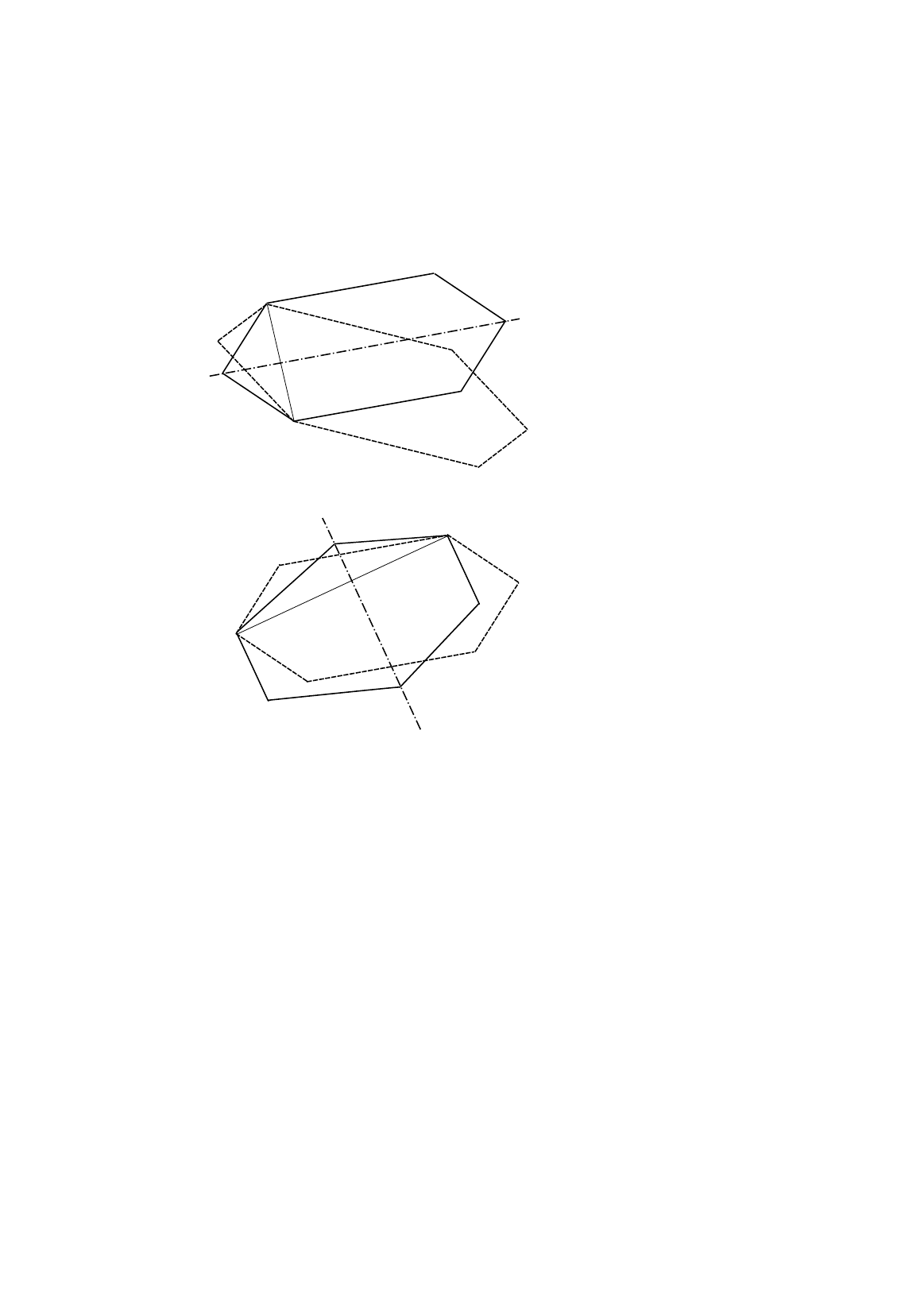}}    
  \put(0.82, 0.05){\color[rgb]{0,0,0}\makebox(0,0)[lb]{\smash{$A_1$}}}
    \put(0.76, 0.4){\color[rgb]{0,0,0}\makebox(0,0)[lb]{\smash{$E_1$}}}
        \put(0.5, 0.2){\color[rgb]{0,0,0}\makebox(0,0)[lb]{\smash{$F_1=F_2$}}}
        \put(1.35, 0.12){\color[rgb]{0,0,0}\makebox(0,0)[lb]{\smash{$B_1$}}}
  \put(1.46, 0.34){\color[rgb]{0,0,0}\makebox(0,0)[lb]{\smash{$C_1$}}}
    \put(1.24, 0.48){\color[rgb]{0,0,0}\makebox(0,0)[lb]{\smash{$D_1=D_2$}}} 
    \put(0.73, -0.01){\color[rgb]{0,0,0}\makebox(0,0)[lb]{\smash{$A_2$}}}
       \put(1.17, 0.03){\color[rgb]{0,0,0}\makebox(0,0)[lb]{\smash{$B_2$}}}
          \put(1.355, 0.28){\color[rgb]{0,0,0}\makebox(0,0)[lb]{\smash{$C_2$}}}
           \put(0.97, 0.465){\color[rgb]{0,0,0}\makebox(0,0)[lb]{\smash{$E_2$}}}
 \end{picture}%
\endgroup%
\vskip .5 cm
\caption{}
\label{figura2}   
\end{figure}

First of all we observe that, since Steiner symmetrization decreases perimeter, we have  that 
$$4a_1+2b_1\geq 4a_2+2 b_2\,.$$ 
  Next we observe that 
the triangle $D_1E_1F_1$ becomes the isosceles triangle $D_2E_2F_2$, which has the same basis 
$D_2  F_2 = D_1 F_1$, and
 two congruent sides of length $a_2$. Thus,  applying again the 
 monotonicity of perimeter under Steiner symmetrization, we obtain 
 $$a_1+b_1\geq 2 a_2\,.$$  
Finally we look at the vertices $A_1, B _ 1, C_1$: they are turned into the new vertices $A_2, B_2, C_2$,  which satisfy 
$ \overline { A_2B_2} = \overline {B _2 C_2} = a_2$, and $\overline {A_2 F_2 } = \overline{C_ 2  D_2 } =b_2$.  Since the diagonal $A_1C_1$ is parallel to the diagonal $D_1F_1$, and since we are symmetrizing with respect to the axis of $D_1F_1$, the side $A_2F_2$ is the orthogonal projection of the side $A_1F_1 $ onto the parallel to the symmetrization axis passing through $F_1$. In particular, due to this fact, we see that 
$$b_2\leq a_1\,.$$    

We now continue by applying recursively the same kind of symmetrization procedure, obtaining by this way a sequence of centrally symmetric hexagons $H_n$,   with $|H_n|=|H_0|$,  each one with  $2$ congruent sides of length $b_n$, $4$ congruent sides of length $a_n$, and such that the following conditions hold
for every $n\geq 1$: 

\smallskip
\begin{enumerate}\item[(i)] $4a_{n+1}+2 b_{n+1}\leq 4a_n+2b_n$,

\smallskip
\item[(ii)] $ 2 a_{n+1}\leq a_n+b_n $,

\smallskip
\item[(iii)] $b_{n+1}\leq a_n$. 
\end{enumerate} 

\smallskip 
{\bf Step 3: convergence of the sequence.}

For every $n \geq 1$, we define $c_n=\max(a_n, b_n)$ and $d_n=\min (a_n, b_n)$. Note that $0<d_n\leq c_n$ for every $n \geq 1$. We claim that 
\begin{equation}\label{f:claim}
\lim_n c_n =  \lim_n d_n   >0\,.
\end{equation} 
Once proved this claim, it is immediate to conclude that $H_n$ is converging in measure 
(or equivalently in Hausdorff distance) 
to the regular hexagon $H^*$, with sides of length equal to  any of the limits in \eqref{f:claim}. 
We are left to prove \eqref{f:claim}.  By properties (ii),(iii) above, and by the  definition of $ c_n$,  we have that
\begin{equation}\label{uno} 2 a_{n+1}\leq a_n+b_n \leq 2 c_n\qquad  b_{n+1}\leq a_n\leq c_n.\end{equation} 
From the above inequalities and the definition of $c _{n+1}$, we infer that  
\[c_{n+1}\leq c_n,\] 
namely the sequence $c_n$ is monotone non increasing, so that
\begin{equation}\label{limit} \lim_n c_n = \inf _n c_n =: c.\end{equation}  
Notice that necessarily $c>0$, since the area of each  $H_n$ is constantly equal to the area of $H_0$.   

Let now $\eps>0$ be fixed, and let $n_\eps$ be such that $c_n\leq c+\eps$ for $n>n_\eps$.  

We  are going to show that
\begin{equation}\label{f:claim2} 
\max(d_n, d_{n+1})\geq c-\eps \qquad \forall n > n _ \eps\,.
\end{equation}  
In turn, this implies that 
$$c-\eps\leq\max(d_n, d_{n+1})\leq c_n\leq c+\eps \qquad \forall n>n_\eps \,,$$
which ensures that $\lim d_n=c$ (and hence that  our claim 
\eqref{f:claim} holds true).

In order to prove \eqref{f:claim2}, we consider any $n>n_\eps$, and we distinguish two possible situations: either $c_n=b_n$ and $d_n=a_n$, or  $c_n=a_n$ and $d_n=b_n$.

\begin{itemize}
\item[--] 
Case $c_n=b_n$ and $d_n=a_n$.  By properties (ii)-(iii) in Step 2, we have 
$$2a_{n+1}\leq a_n+b_n= c_n+d_n \qquad \text{ and } \qquad b_{n+1}\leq a_n=d_n\leq \frac{c_n+d_n}{2} \,.$$
Hence
\[c\leq c_{n+1}\leq \frac{c_n+d_n}{2}\leq \frac{c+\eps+d_n}{2}\,,\]
yielding that $d_n\geq c-\eps$. 

\smallskip
\item[--] 
Case $c_n=a_n$ and $d_n=b_n$.  We distinguish two further sub-cases. 

If $c_{n+1}=a_{n+1}$, by property (ii), we get 
$$2c\leq 2c_{n+1}=2a_{n+1}\leq c_n+d_n\leq c+\eps+d_n \,,$$
yielding that  $d_n\geq c-\eps$. 

If $c_{n+1}=b_{n+1}$, by properties (ii)-(iii) in Step 2,  we get
$$\begin{cases}
2a_{n+2}\leq a_{n+1}+b_{n+1}= c_{n+1}+d_{n+1} & \\ 
\noalign{\medskip}
b_{n+2}\leq a_{n+1}=d_{n+1}\leq \frac{c_{n+1}+d_{n+1}}{2} \,,& 
\end{cases} $$ 
so that 
\[c\leq c_{n+2}\leq \frac{c_{n+1}+d_{n+1}}{2}\leq \frac{c+\eps+d_{n+1}}{2}\,, \]
 yielding that $d_{n+1}\geq c-\eps$. 
 \end{itemize}
 
 \end{proof}

 \section{The optimality of the honeycomb}\label{sec:main}

We now consider the isoperimetric problem 
 \begin{equation}\label{iso2poly} 
\inf \Big \{  \mathcal{F}(E) \ :\ E \in \mathcal H\,, \ |E|=1 \Big \}\,, 
 \end{equation}  
 where $\mathcal{H}$ denotes as in the previous section the class of   convex polygons which tessellate the plane  by translations, and $\mathcal F $ belongs to a broad class of cost functionals.  Precisely, we assume that 
 \[\mathcal{F}:\mathcal{H}\to \R\cup\{+\infty\},\]
 satisfies the following conditions:
\begin{itemize}
\item[--] if $H_n$ is a sequence in $\mathcal{H}$ converging to  $H $ in Hausdorff distance, then \begin{equation}\label{lsc}\liminf_n \mathcal{F}(H_n)\geq \mathcal{F}(H);
\end{equation}
\item[--] for every line $\pi$ and every $H\in \mathcal{\mathcal H}$, if
$S _\pi ( H)$ denotes the Steiner symmetrization of $H$ with respect to the line $\pi$ (see \eqref{f:steiner}),  
 it holds \begin{equation}\label{steiner}\mathcal{F}(S_\pi(H))\leq \mathcal{F}(H).\end{equation} 
\end{itemize}

 \bigskip 
As a direct consequence of Lemma \ref{lemmahex},  we obtain:

 \begin{theorem}\label{thm1} 
For any functional $\mathcal{F}$   satisfying  \eqref{lsc} and \eqref{steiner},    there holds 
$$\min_{H\in \mathcal{H}} \mathcal{F}(H) =  \mathcal{F}(H^*)\,,$$
 where $H^*$ is the regular hexagon with unit area. 
Moreover, if \eqref{steiner} holds as a strict inequality when $H \neq S_\pi(H)$,  then  $H^*$ is the unique minimizer.
\end{theorem}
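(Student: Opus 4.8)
\textbf{Proof proposal for Theorem \ref{thm1}.}

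The plan is to deduce the theorem almost immediately from Lemma \ref{lemmahex}, using only the two structural hypotheses \eqref{lsc} and \eqref{steiner} on $\mathcal F$. First I would fix an arbitrary $H_0 \in \mathcal H$ with $|H_0| = 1$, and invoke Lemma \ref{lemmahex} to produce a sequence of Steiner symmetrizations transforming $H_0$ into the regular hexagon $H^*$ of unit area. Concretely, this means there is a sequence $H_n \in \mathcal H$ with $H_{n+1} = S_{\pi_n}(H_n)$ for suitable lines $\pi_n$ through the origin, with $|H_n| = |H_0| = 1$ for all $n$, and with $H_n \to H^*$ in the Hausdorff distance. Applying \eqref{steiner} at each step and chaining the inequalities yields $\mathcal F(H_n) \le \mathcal F(H_{n-1}) \le \cdots \le \mathcal F(H_0)$, so the sequence $\mathcal F(H_n)$ is nonincreasing and in particular $\mathcal F(H_n) \le \mathcal F(H_0)$ for every $n$. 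Then the lower semicontinuity hypothesis \eqref{lsc}, together with $H_n \to H^*$, gives
$$\mathcal F(H^*) \le \liminf_n \mathcal F(H_n) \le \mathcal F(H_0).$$
Since $H_0$ was an arbitrary admissible competitor and $H^* \in \mathcal H$ with $|H^*| = 1$ is itself admissible, this shows $H^*$ realizes the infimum in \eqref{iso2poly}, i.e. $\min_{H \in \mathcal H} \mathcal F(H) = \mathcal F(H^*)$. (One should note that a Steiner symmetrization of a polygon in $\mathcal H$ stays in $\mathcal H$; this is exactly what Lemma \ref{lemmahex} guarantees implicitly, since each $H_n$ is again a centrally symmetric hexagon or a parallelogram, and it follows also from the elementary fact recorded before the lemma that $S_\pi$ preserves the class $\mathcal C$ of compact convex sets.)

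For the uniqueness statement, I would argue by contradiction. Suppose \eqref{steiner} is strict whenever $H \neq S_\pi(H)$, and suppose $H_0 \in \mathcal H$ with $|H_0| = 1$ is a minimizer with $H_0 \neq H^*$ (up to rigid motions, and I would remark that $\mathcal F$ is naturally invariant under translations/rotations, or else work with the appropriate equivalence class). Running the same construction, if $H_0$ is not already the regular hexagon then at least one of the symmetrizations $\pi_n$ in the sequence must act nontrivially, say $H_{m+1} = S_{\pi_m}(H_m) \neq H_m$ for some $m$; indeed if every symmetrization were trivial the sequence would be constant and could not converge to $H^* \neq H_0$. For that index, strictness gives $\mathcal F(H_{m+1}) < \mathcal F(H_m) \le \mathcal F(H_0)$, whence $\mathcal F(H^*) \le \liminf_n \mathcal F(H_n) \le \mathcal F(H_{m+1}) < \mathcal F(H_0)$, contradicting the minimality of $H_0$. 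Therefore $H^*$ is the unique minimizer.

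I do not expect a serious obstacle here: the content is entirely in Lemma \ref{lemmahex}, and the theorem is a soft consequence. The one point requiring a little care is the bookkeeping in the uniqueness argument, namely ruling out the degenerate possibility that the symmetrization sequence supplied by the lemma happens to be stationary at a non-regular hexagon — but this is impossible precisely because the lemma asserts Hausdorff convergence to $H^*$, so a stationary sequence would force $H_0 = H^*$. A secondary point worth a sentence is the tacit invariance of $\mathcal F$ under the isometries of the plane (so that "the regular hexagon with unit area" and "unique minimizer" are understood modulo rigid motions); for functionals like perimeter, Cheeger constant, or Dirichlet eigenvalue this invariance is automatic, and in any case the family $\mathcal H$ and the symmetrization construction are only defined up to such motions.
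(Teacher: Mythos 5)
Your proposal is correct and follows essentially the same route as the paper: fix $H_0\in\mathcal H$, run the symmetrization sequence of Lemma \ref{lemmahex}, chain \eqref{steiner} to get $\mathcal F(H_n)\le\mathcal F(H_0)$, and pass to the limit with \eqref{lsc} and the Hausdorff convergence $H_n\to H^*$. Your uniqueness argument (some symmetrization must act nontrivially when $H_0\neq H^*$, since otherwise the sequence is stationary and cannot converge to $H^*$) just spells out the detail the paper compresses into a single sentence, so there is nothing to add.
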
 
\begin{proof}  For every $H\in\mathcal{H}$, we let $H _0 = H$  and we construct the sequence $H_n$ as in Lemma \ref{lemmahex}.  Then by \eqref{steiner} we get that $\mathcal{F}(H_{n+1})\leq \mathcal{F}(H_{n})\leq \mathcal{F}(H)$ for every $n\geq 1$. By \eqref{lsc} we have
$\mathcal{F}(H^*)\leq \liminf _n \mathcal{F}(H_{n})\leq \mathcal{F}(H)$, where $H^*$ is the regular hexagon with area $1$. 
Moreover, we have the strict inequality $\mathcal{F}(H^*) <  \mathcal{F}(H)$ in case  \eqref{steiner} is strict when
$H\neq S_\pi(H)$. 
\end{proof} 

\bigskip
 
To conclude,  we point out that
 the class of  
functionals satisfying assumptions \eqref{lsc}-\eqref{steiner} includes plenty of 
 geometric or variational energies, other than
 the classical De Giorgi perimeter  and the Cheeger constant
  (for which as mentioned in the Introduction the optimality of the honeycomb is already known in a much broader sense, without the periodicity and convexity constraints). 
  A non hexaustive list of such functionals
is given below.

\begin{itemize}
%\item The classical De Giorgi perimeter $\Per(E)$, see \cite{maggi}.
%\item The diameter of a convex set, defined as follows: for $E\in \mathcal{C}$ we define $\text{diam}(E)=\max_{x,y\in E}|x-y|$, see \cite{maggi}.
\item Nonlocal perimeter functionals  of the kind
\[\Per_K(E)=\int_{E}\int_{\R^2\setminus E} K(x-y)dxdy,\] for any kernel $K$
which is radial,  nonnegative,  with $ \min \{ 1, |h| \} K (| h|) \in L^ 1 (\R ^2)$,  and $\liminf_{z\to 0^+}\big [  K(|z|)-K(|z+x|) \big ] >0$ for  $x \neq 0$, see \cite{cnf1}. Note that if $K$ is strictly decreasing, then \eqref{steiner} holds as a strict inequality for $H\neq S_\pi(H)$. 
\item Riesz-type energies of the form \[\mathcal{R}(E)=-\int_E\int_E K(x-y)dxdy,\] for any kernel $K$ which is   radial, decreasing, and positive definite,  with $\int_0^1 K(r)dr<+\infty$ (see \cite{lieb} and the related papers 
 \cite{BCT,BBF}). 
Also in this case, if $K$ is strictly decreasing, \eqref{steiner} holds as a strict inequality for $H\neq S_\pi(H)$. 

\smallskip
\item  The logaritmic capacity:
\[\text{LogCap}(E)=e^{-W(E)}\,, \qquad\text{where } W(E)=\inf_{\mu\in \mathcal{P}_E} \int_{E}\int_{E} \log(|x-y|)d\mu(x)d\mu(y),\] 
being $\mathcal{P}_E$ the class of Borel probability measures supported in $E$,  see \cite[Theorem 6.29]{barn}.

\smallskip
\item The Riesz-$\alpha$-capacity for any $\alpha\in (0,2)$:  \[\mathcal{I}_\alpha(E)=\frac{1}{W_\alpha(E)}\,, \qquad\text{where }W_\alpha(E)= \inf_{\mu\in \mathcal{P}_E} \int_{E}\int_{E} |x-y|^{\alpha-2} d\mu(x)d\mu(y)\] where $\mathcal{P}_E$ is as above, see \cite{lieb}, \cite[Theorem 6.29]{barn}.

\smallskip
\item The variational $p$-capacity for any $p\in (1,2)$:  \[\text{Cap}_p(E)=\inf\{ \int _ {\R ^ 2} |\nabla u| ^ p  \ :\ u\in C ^ \infty_0 (\R^2)\, , \ u=1 \text{ on }E\},\] see \cite[Chapter 6]{barn}. 
%\item The Cheeger constant 
%\[\BBB h (E) \EEE =\inf\left\{ \frac{\Per(A)}{|A|}, A\subseteq E\right\} \]
%see \cite{BF1}. 

\smallskip
\item The first Dirichlet Laplacian eigenvalue: \[\lambda_1(E)= \inf\left\{\int_E |\nabla u|^2 dx \ :\ u\in H _0 ^{1 }(E), \ \int_E | u|^2 dx =1\right\}\]  see \cite[Chapter 3]{hen}.
 
\end{itemize}

\begin{bibdiv}
\begin{biblist}
 \bib{barn}{book}{
AUTHOR = {Baernstein, A.},
     TITLE = {Symmetrization in analysis},
    SERIES = {New Mathematical Monographs},
    VOLUME = {36},
      NOTE = {With David Drasin and Richard S. Laugesen,
              With a foreword by Walter Hayman},
 PUBLISHER = {Cambridge University Press, Cambridge},
      YEAR = {2019},
     PAGES = {xviii+473},
}

\bib{BBF}{article}{
    AUTHOR = {Bogosel, B.},
    AUTHOR = {Bucur, D.},
    AUTHOR = {Fragal\`a, I.},
     TITLE = {The nonlocal isoperimetric problem for polygons:  
Hardy-Littlewood and Riesz inequalities 
},
   JOURNAL = {Math. Ann.}
      YEAR = {2023},

   }

\bib{BCT}{article}{
    AUTHOR = {Bonacini, M.},
AUTHOR = {Cristoferi, R.},
AUTHOR = {Topaloglu, I.},
     TITLE = {Riesz-type inequalities and overdetermined problems for
              triangles and quadrilaterals},
   JOURNAL = {J. Geom. Anal.},
    VOLUME = {32},
      YEAR = {2022},
    NUMBER = {2},
     PAGES = {Paper No. 48},
     }
		
\bib{BBFV}{article}{
    AUTHOR = {Bucur, D.},
    AUTHOR = {Fragal\`a, I.}, 
    AUTHOR = {Velichkov, B.},
    AUTHOR = {Verzini, G.},
     TITLE = {On the honeycomb conjecture for a class of minimal convex partitions 
},
   JOURNAL = {Trans. Amer. Math. Soc.}
   VOLUME = {370},
      YEAR = {2018},
    NUMBER = {10},
     PAGES = {7149--7179},
   
   }

\bib{BF1}{article}{
    AUTHOR = {Bucur, D.},
    AUTHOR = {Fragal\`a, I.}, 
     TITLE = {Proof of the honeycomb asymptotics for optimal Cheeger clusters
},
   JOURNAL = {Adv. Math.}
   VOLUME = {350},
      YEAR = {2019},
%    NUMBER = {10},
     PAGES = {97--129},
   
   }

\bib{BF2}{article}{
    AUTHOR = {Bucur, D.},
    AUTHOR = {Fragal\`a, I.}, 
     TITLE = {On the honeycomb conjecture for Robin Laplacian eigenvalues},
   JOURNAL = {Commun. Contemp. Math.}
   VOLUME = {21},
      YEAR = {2019},
    NUMBER = {2},
     PAGES = {29 pp.},
 
   }

   	\bib{CaffLin}{article}{
	Author = {Caffarelli, L. A.},
	Author = {Lin, F. H.},
	Fjournal = {Journal of Scientific Computing},
	Journal = {J. Sci. Comput.},
	Number = {1-2},
	Pages = {5--18},
	Title = {An optimal partition problem for eigenvalues},
	Volume = {31},
	Year = {2007}}

 \bib{cnf1}{article}{
 author={Cesaroni, A.},
 author={ Fragalà, I.},
 author={ Novaga, M},
 title={ Lattice tilings minimizing nonlocal perimeters},
 journal={Commun. Contemp. Math. },
 volume={27},
 year={2025},
 number={6}, 
 pages={Paper No. 2450043},
 }
     
\bib{cnparti}{article}{
    AUTHOR = {Cesaroni, A.},
    author={Novaga, M.},
  TITLE = {Periodic partitions with minimal perimeter},
   JOURNAL = {Nonlinear Anal.},
    VOLUME = {243},
      YEAR = {2024},
     PAGES = {Paper No. 113522, 16},   }

%\bib{cntiling}{article}{
  %   AUTHOR = {Cesaroni, A.},
%    author={Novaga, M.},
%         TITLE = {Minimal periodic foams with fixed inradius},
%   JOURNAL = {Mathematika},
%    VOLUME = {71},
 %     YEAR = {2025},
%    NUMBER = {2},
 %    PAGES = {Paper No. e70020, 13},
%}

% \bib{colombomaggi} {article}{
    %AUTHOR = {Colombo, M.},
 %   author={Maggi, F.},
  %   TITLE = {Existence and almost everywhere regularity of isoperimetric
  %            clusters for fractional perimeters},
 %  JOURNAL = {Nonlinear Anal.},
 %      VOLUME = {153},
%      YEAR = {2017},
%     PAGES = {243--274},
%}

\bib{fedorov}{article}{ 
AUTHOR = {Fedorov, E.S.},
TITLE = {The Symmetry of Regular Systems of Figures}, 
JOURNAL = {Proceedings of the Imperial St. Petersburg Mineralogical Society}, 
VOLUME = {28}, 
YEAR = {1891},
PAGES = {1--146},
} 
%\bib{i5}{article}{
%    AUTHOR = {Figalli, A.},
%    author={Fusco, N.}, author={Maggi, F.}, author={Millot, V.},
%    author={Morini, M.},
%     TITLE = {Isoperimetry and stability properties of balls with respect to
%              nonlocal energies},
%   JOURNAL = {Comm. Math. Phys.},
%     VOLUME = {336},
%      YEAR = {2015},
%    NUMBER = {1},
%     PAGES = {441--507},
%}
\bib{Gal14}{article}{ 
AUTHOR = {Gallagher, P.},
 author={Ghang, Whan},  
 author={Hu, David},
 author={Martin, Zane}, 
 author={Miller, Maggie},
 author={Perpetua, Byron}, 
 author={Waruhiu, Steven},
 title={Surface-area-minimizing n-hedral Tiles},
journal={Rose-Hulman Undergraduate Mathematics Journal}, 
volume={15},
year={2014},
NUMBER = {1},
PAGES = {Article 13},
}

\bib{glasner}{article}{
    AUTHOR = {Glasner, K.},
     TITLE = {Segregation and domain formation in non-local multi-species
              aggregation equations},
   JOURNAL = {Phys. D},
    VOLUME = {456},
      YEAR = {2023},
     PAGES = {Paper No. 133936, 11},
     }

\bib{hales}{article}{
author={ Hales, T.C.},
title={The honeycomb conjecture},
journal={Discrete Comput. Geom.},
volume={ 25},
 YEAR = {2001},
    NUMBER = {1},
     PAGES = {1--22},
}
\bib{hen}{book}{
AUTHOR = {Henrot, A.},
     TITLE = {Extremum problems for eigenvalues of elliptic operators},
    SERIES = {Frontiers in Mathematics},
 PUBLISHER = {Birkh\"{a}user Verlag, Basel},
      YEAR = {2006},
     PAGES = {x+202},
}

\bib{lieb}{book}{
AUTHOR = {Lieb, E. H.},
author={Loss, M.},
     TITLE = {Analysis},
    SERIES = {Graduate Studies in Mathematics},
    VOLUME = {14},
   EDITION = {Second},
 PUBLISHER = {American Mathematical Society, Providence, RI},
      YEAR = {2001},
     PAGES = {xxii+346},
     }

\bib{maggi}{book}{
    AUTHOR = {Maggi, F.},
     TITLE = {Sets of finite perimeter and geometric variational problems},
    SERIES = {Cambridge Studies in Advanced Mathematics},
    VOLUME = {135},
      %NOTE = {An introduction to geometric measure theory},
 PUBLISHER = {Cambridge University Press, Cambridge},
      YEAR = {2012},
}
%\bib{moc}{article}{
%    AUTHOR = {Mahadevan, R.},
%    author= {Olivares-Contador, F.},
%     TITLE = {A {F}aber-{K}rahn inequality for the {R}iesz potential
%              operator for triangles and quadrilaterals},
%   JOURNAL = {J. Spectr. Theory}, 
%    VOLUME = {11},
%      YEAR = {2021},
%    NUMBER = {4},
%     PAGES = {1935--1951},
%}

%\bib{mnpr}{article}{
%    AUTHOR = {Martelli, B.}, 
 %   AUTHOR = {Novaga, M.}, 
  %  AUTHOR = {Pluda, A.},
  %  AUTHOR = {Riolo, S.},
 %    TITLE = {Spines of minimal length},
%   JOURNAL = {Ann. Sc. Norm. Super. Pisa Cl. Sci. (5)},
%    VOLUME = {17},
%      YEAR = {2017},
%    NUMBER = {3},
%     PAGES = {1067--1090},
%     }
     
\bib{mm}{article}{
    AUTHOR = {McMullen, P.},
     TITLE = {Convex bodies which tile space by translation},
   JOURNAL = {Mathematika},
    VOLUME = {27},
      YEAR = {1980},
    NUMBER = {1},
     PAGES = {113--121},
}

%\bib{morgan}{article}{
%    AUTHOR = {Morgan, F.},
%    AUTHOR = {French, C.},
%    AUTHOR = {Greenleaf, S.},
%     TITLE = {Wulff clusters in {$\mathbb R^2$}},
%   JOURNAL = {J. Geom. Anal.},
%    VOLUME = {8},
%      YEAR = {1998},
%    NUMBER = {1},
%     PAGES = {97--115},
%}

\bib{morgansolo}{article}{
    AUTHOR = {Morgan, F.},
     TITLE = {The hexagonal honeycomb conjecture},
   JOURNAL = {Trans. Amer. Math. Soc. },
    VOLUME = {351},
      YEAR = {1999},
    NUMBER = {5},
     PAGES = {1753--1763},
}

%
%\bib{taylor}{article}{
%    AUTHOR = {Taylor, J. E.},
%     TITLE = {The structure of singularities in solutions to ellipsoidal
%              variational problems with constraints in {${\rm R}^{3}$}},
%   JOURNAL = {Ann. of Math. (2)},
%    VOLUME = {103},
%      YEAR = {1976},
%    NUMBER = {3},
%     PAGES = {541--546},
%}

\bib{thompson}{article}{ 
AUTHOR = {Thomson (Lord Kelvin), W.},
     TITLE = {On the division of space with minimum partitional area},
   JOURNAL = {Acta Math.},
    VOLUME = {11},
      YEAR = {1887},
    NUMBER = {1-4},
     PAGES = {121--134},
}
%\bib{v}{article}{ 
%    AUTHOR = {Viazovska, Maryna S.},
%     TITLE = {The sphere packing problem in dimension $8$},
%   JOURNAL = {Ann. of Math. (2)},
%    VOLUME = {185},
%      YEAR = {2017},
%    NUMBER = {3},
%     PAGES = {991--1015},
%}
%	
\bib{kelvin}{book}{
EDITOR = {Weaire, D.},
     TITLE = {The {K}elvin problem},
         NOTE = {Foam structures of minimal surface area},
 PUBLISHER = {Taylor \& Francis, London},
      YEAR = {1996},
      }
 
% \bib{we}{article}{
%author={ D. Weaire}, 
%title={Kelvin's foam structure: a commentary},
%journal={ Philosophical
%Magazine Letters}, 
%volume={88},
%number={2},
%pages={ 91--102},
%year={2008},}
%
%\bib{w}{article}{
%author = {Weaire, D.}, 
%author={Phelan, R.},
%title = {A counter-example to Kelvin's conjecture on minimal surfaces},
%journal = {Philosophical Magazine Letters},
%volume = {69},
%number = {2},
%pages = {107-110},
%year  = {1994},
%} 
\end{biblist}\end{bibdiv}
\end{document}